\title[On the eigenvalues of a Robin problem] {On the asymptotic
  behaviour of the eigenvalues of a Robin problem}
\author{Daniel Daners}
\author{James B.~Kennedy}
\dedicatory{\upshape
School of Mathematics and Statistics\\
University of Sydney, NSW 2006, Australia\\[.5em]
\texttt{D.Daners@maths.usyd.edu.au\\J.Kennedy@maths.usyd.edu.au}\\[1em]
}
\newtheorem{theorem}{Theorem}[section]
\newtheorem{lemma}[theorem]{Lemma}
\newtheorem{proposition}[theorem]{Proposition}
\theoremstyle{remark}
\newtheorem{remark}[theorem]{Remark}
\numberwithin{equation}{section}
\newcommand{\R}{\mathbb{R}}
\newcommand{\N}{\mathbb{N}}
\DeclareMathOperator{\divergence}{div}
\newcommand*{\subsubset}{\subset\joinrel\subset}
\begin{document}

\begin{abstract}
  We prove that every eigenvalue of a Robin problem with boundary
  parameter $\alpha$ on a sufficiently smooth domain behaves
  asymptotically like $-\alpha^2$ as $\alpha \to \infty$. This
  generalises an existing result for the first eigenvalue.
\end{abstract}

\thanks{\emph{Mathematics Subject Classification} (2000). 35P15
  (35B40, 35J05)}

\thanks{\emph{Key words and phrases}. Laplacian,  Robin boundary
  conditions, eigenvalue asymptotics}

\maketitle


\section{Introduction and Main Results}
\label{sec:intro}

We are interested in the eigenvalue problem
\begin{equation}
    \label{eq:robin}
    \begin{aligned}
        -\Delta u&= \lambda u &\quad &\text{in $\Omega$},\\
        \frac{\partial u}{\partial\nu}& =\alpha u &&
        \text{on $\partial\Omega$}
    \end{aligned}
\end{equation}
where we assume $\Omega\subset\R^N$ is a bounded domain, that is, a
bounded open set, without loss of generality connected, and $\alpha >
0$. The problem \eqref{eq:robin} is usually referred to as a Robin
problem (in comparison with the case $\alpha<0$) or sometimes as a
generalised Neumann problem. This problem has received considerable
attention in the last few years; see for example
\cite{afrouzi:99:pei,daners:09:ip,giorgi:07:ee,giorgi:08:bm,lacey:98:rde,levitin:08:rl,lou:04:spe}
and the references therein.  It is well-known that if $\Omega$ is
Lipschitz then there is a sequence of eigenvalues $\lambda_1 <
\lambda_2 \leq \ldots \to \infty$, which we repeat according to their
multiplicities, where $\lambda_1 < 0$ is simple and is the unique
eigenvalue with a positive eigenfunction $\psi_1$. Our main result is
as follows.

\begin{theorem}
  \label{th:asymp}
  Suppose $\Omega \subset \R^N$ is a bounded domain of class $C^1$.
  Then for every $n \geq 1$ we have
  \begin{equation}
    \label{eq:asymp}
    \lim_{\alpha \to \infty}\frac{\lambda_n(\alpha)}{-\alpha^2} = 1.
  \end{equation}
\end{theorem}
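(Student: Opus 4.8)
The plan is to establish matching upper and lower asymptotic bounds for $\lambda_n(\alpha)$ using the variational characterisation
\[
  \lambda_n(\alpha)=\min_{\substack{V\subset H^1(\Omega)\\ \dim V=n}}\ \max_{0\neq u\in V}\ \frac{\int_\Omega|\nabla u|^2\,dx-\alpha\int_{\partial\Omega}u^2\,d\sigma}{\int_\Omega u^2\,dx},
\]
which is available because $\Omega$ is in particular Lipschitz, so that the trace $H^1(\Omega)\to L^2(\partial\Omega)$ is compact and the form $Q_\alpha(u):=\int_\Omega|\nabla u|^2-\alpha\int_{\partial\Omega}u^2$ is associated with a self-adjoint, bounded-below operator with compact resolvent. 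Since $\lambda_n(\alpha)\to-\infty$, it suffices to prove that $\limsup_{\alpha\to\infty}\lambda_n(\alpha)/(-\alpha^2)\le1$ and $\liminf_{\alpha\to\infty}\lambda_n(\alpha)/(-\alpha^2)\ge1$.

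The first inequality follows from monotonicity of the eigenvalues in $n$ together with the already known case $n=1$: since $\lambda_n(\alpha)\ge\lambda_1(\alpha)$ and dividing by $-\alpha^2<0$ reverses the inequality, we get $\lambda_n(\alpha)/(-\alpha^2)\le\lambda_1(\alpha)/(-\alpha^2)$ for every $\alpha>0$, and the right-hand side tends to $1$ by the existing first-eigenvalue result. This is precisely the sense in which Theorem~\ref{th:asymp} generalises it.

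The real work is the lower bound $\liminf_{\alpha\to\infty}\lambda_n(\alpha)/(-\alpha^2)\ge1$, which amounts to producing an $n$-dimensional subspace on which the Rayleigh quotient is at most $-\alpha^2(1+o(1))$. I would choose $n$ distinct points $y_1,\dots,y_n\in\partial\Omega$ with pairwise disjoint open neighbourhoods (possible whenever $\partial\Omega$ is infinite, in particular for $N\ge2$), flatten the boundary near each $y_j$ using the $C^1$-regularity so that $\Omega$ locally becomes a half-space $\{x_N>0\}$, and set, after transplanting back to $\Omega$,
\[
  u_j(x)=\phi_j(x')\,\chi(x_N)\,e^{-\alpha x_N},
\]
where $x=(x',x_N)$ are the local coordinates, $0\le\phi_j\in C^\infty_c$ is a fixed tangential cut-off and $\chi$ a fixed cut-off equal to $1$ near $x_N=0$. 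A direct computation in the flat model — the gradient is dominated by $\partial_{x_N}e^{-\alpha x_N}$, the $\chi'$-terms are exponentially small in $\alpha$, and $\int_0^\infty e^{-2\alpha t}\,dt=1/(2\alpha)$ — gives $Q_\alpha(u_j)=-\frac{\alpha}{2}\bigl(\int\phi_j^2\bigr)(1+o(1))$ and $\int u_j^2=\frac{1}{2\alpha}\bigl(\int\phi_j^2\bigr)(1+o(1))$, so that the Rayleigh quotient of $u_j$ is $-\alpha^2(1+o(1))$. Because the $u_j$ have disjoint supports, $Q_\alpha$ and the $L^2$ inner product are block-diagonal on $V:=\operatorname{span}\{u_1,\dots,u_n\}$; hence $\dim V=n$ and $\max_{0\neq u\in V}Q_\alpha(u)/\|u\|_{L^2}^2=\max_j Q_\alpha(u_j)/\|u_j\|_{L^2}^2$, which by min-max bounds $\lambda_n(\alpha)$ from above.

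The main obstacle is the weak regularity: on a merely $C^1$ domain the straightening diffeomorphism is only $C^1$, so the pulled-back metric — equivalently, the coefficients of the transformed Laplacian and the transformed surface measure — are only continuous, and one cannot declare outright that passing from $\Omega$ to the flat model costs only an $o(\alpha^2)$ error. The remedy is a quantitative localisation: given $\varepsilon>0$, shrink the neighbourhoods of the $y_j$ until the transplanted metric is within $\varepsilon$ of the Euclidean one throughout each chart; the computation above then yields $\lambda_n(\alpha)\le-(1-C\varepsilon)\alpha^2+O_\varepsilon(\alpha)$ for all sufficiently large $\alpha$ — note that the concentration length scale $1/\alpha$ of $u_j$ fits inside the fixed chart once $\alpha$ is large — so that $\liminf_{\alpha\to\infty}\lambda_n(\alpha)/(-\alpha^2)\ge1-C\varepsilon$, and letting $\varepsilon\to0$ completes the argument.
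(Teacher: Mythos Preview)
Your argument is correct and takes a genuinely different route from the paper's. Both proofs obtain the easy bound $\limsup_{\alpha\to\infty}\lambda_n/(-\alpha^2)\le 1$ from monotonicity and the $n=1$ result, but they diverge on the hard direction. The paper proceeds by induction on $n$: it uses a single \emph{global} test function $u_d(x)=c\,e^{\alpha x\cdot d}$ for a unit vector $d$ and shows, via the divergence theorem, the exact inequality $a(u_d,u_d)\le -\alpha^2$ on any Lipschitz domain---no boundary straightening is needed. It then projects $u_d$ off the span of $\psi_1,\ldots,\psi_n$ to test $\lambda_{n+1}$; the $C^1$ hypothesis enters only through a geometric separation lemma (distinct directions $d$ concentrate mass in disjoint pieces of $\Omega$ because the outer normal determines $d$), after which a pigeonhole argument produces a direction with $\sum_i\langle u_d,\psi_i\rangle^2$ small. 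Your approach instead builds an $n$-dimensional trial space from $n$ disjointly supported boundary-layer functions in $C^1$ charts and invokes the Courant--Fischer min--max directly, with no induction and no reference to the eigenfunctions. What your method buys is conceptual directness and a familiar semiclassical flavour; what the paper's method buys is that the key test-function estimate is exact for every $\alpha$ and chart-free, so the regularity assumption is isolated in a purely geometric lemma rather than in an $\varepsilon$-perturbation of the metric.
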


It was shown in \cite{lacey:98:rde} that for $\Omega$ piecewise-$C^1$
the first eigenvalue $\lambda_1$ has the asymptotic behaviour
$\liminf_{\alpha \to \infty} -\lambda_1 (\alpha)/\alpha^2 \geq 1$,
with equality if $\partial \Omega$ is equivalent in some sense to a
sphere. It was also observed in \cite{lacey:98:rde} that when $\Omega$
is a ball of radius $1$, there are $\lfloor \alpha \rfloor + 1$
negative eigenvalues of \eqref{eq:robin}, and they satisfy
$\sqrt{-\lambda_n(\alpha)} \sim \alpha + O(1)$ as $\alpha \to \infty$.
It was subsequently shown in \cite{lou:04:spe} that in fact
\begin{equation}
  \label{eq:lim1}
  \lim_{\alpha \to \infty} \frac{\lambda_1(\alpha)}{-\alpha^2} = 1
\end{equation}
for every bounded and $C^1$ domain $\Omega$.  Related results have
been obtained in \cite{giorgi:07:ee,giorgi:08:bm}. The $C^1$
assumption in \eqref{eq:lim1} is optimal: the authors in
\cite{lacey:98:rde} constructed examples of domains with ``corners''
for which the limit in \eqref{eq:lim1} is a constant larger than one.
Such results were generalised and further studied in
\cite{levitin:08:rl}.

\begin{remark}
  \label{rem:weights}
  One can also consider the same problem with the boundary condition
  $\frac{\partial u}{\partial \nu} = \alpha b u$, where $b \in
  C(\partial \Omega)$ is a weight function which is positive
  somewhere. In this case, if $\Omega$ is bounded and $C^1$, then
  \begin{displaymath}
    \lim_{\alpha \to \infty} \frac{\lambda_1 (\alpha)}{-\alpha^2
    (\max_{\partial \Omega}b)^2} = 1
  \end{displaymath}
  (see \cite[Remark~1.1]{lou:04:spe}). It seems the same should be
  true for $\lambda_n$, $n \geq 1$. However all we can say at present
  is that Theorem~1.1 together with the monotonic behaviour of
  $\lambda_n$ with respect to changes in $b$ imply that
  \begin{displaymath}
    \limsup_{\alpha \to \infty} \frac{\lambda_n (\alpha)}{-\alpha^2
    (\max_{\partial \Omega}b)^2} \leq 1.
  \end{displaymath}
\end{remark}

We will also prove the following result on the eigenfunctions of
\eqref{eq:robin}.

\begin{proposition}
  \label{prop:tozero}
  Suppose $\Omega \subset \R^N$ is bounded and $C^1$. Fix $2\leq p <
  \infty$ and let $\psi_n$ be any eigenfunction associated with
  $\lambda_n$, normalised so that $\|\psi_n\|_{L^p(\Omega)} = 1$. Then
  \begin{itemize}
  \item[(i)] $\psi_n \to 0$ in $L_{loc}^p(\Omega)$ as
    $\alpha \to \infty$;
  \item[(ii)] $\| \psi_n \|_{L^q (\Omega)} \to 0$ as $\alpha \to
    \infty$ for $1 \leq q < p$;
  \item[(iii)] $\| \psi_n \|_{L^r (\Omega)} \to \infty$ as $\alpha \to
    \infty$ for $r > p$.
  \end{itemize}
\end{proposition}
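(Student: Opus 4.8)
The strategy is to show that the eigenfunctions concentrate on $\partial\Omega$ as $\alpha\to\infty$, and to deduce all three assertions from one uniform interior estimate. Throughout I write $\lambda_n=\lambda_n(\alpha)$, and for $\delta>0$ I set $\Omega_\delta=\{x\in\Omega:d(x,\partial\Omega)<\delta\}$, noting that $|\Omega_\delta|\to0$ as $\delta\to0$ since $\Omega$ is bounded. By Theorem~\ref{th:asymp} we have $\lambda_n\to-\infty$, so $\lambda_n<0$ and $|\lambda_n|\to\infty$ for $\alpha$ large, which I assume from now on. Since $p\ge2$, $\|\psi_n\|_{L^p(\Omega)}=1$ and $\Omega$ is bounded, Hölder's inequality already gives the $\alpha$-independent bound $\|\psi_n\|_{L^2(\Omega)}\le|\Omega|^{1/2-1/p}=:C_0$.

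The first step is a Caccioppoli-type estimate. Fixing a cut-off $\varphi\in C_c^\infty(\Omega)$ and testing the weak form of \eqref{eq:robin} with $v=\varphi^2\psi_n$ — the boundary term drops out because $\varphi$ vanishes near $\partial\Omega$ — and then applying Young's inequality to the cross term, one obtains
\[
  \tfrac12\int_\Omega\varphi^2|\nabla\psi_n|^2\,dx+|\lambda_n|\int_\Omega\varphi^2\psi_n^2\,dx\le2\int_\Omega\psi_n^2|\nabla\varphi|^2\,dx\le2\|\nabla\varphi\|_\infty^2C_0^2 .
\]
Taking $\varphi\equiv1$ on a given compact $K\subset\Omega$ yields $\|\psi_n\|_{L^2(K)}^2\le2\|\nabla\varphi\|_\infty^2C_0^2/|\lambda_n|\to0$ as $\alpha\to\infty$.

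To upgrade this from $L^2_{loc}$ to $L^\infty_{loc}$ I would use that $\lambda_n<0$: then from $-\Delta\psi_n=\lambda_n\psi_n$ and Kato's inequality one gets $\Delta|\psi_n|\ge|\lambda_n|\,|\psi_n|\ge0$, so $|\psi_n|$ is subharmonic in $\Omega$, and the local boundedness estimate for subharmonic functions provides, for $K\subsubset K'\subsubset\Omega$, a constant $C=C(K,K',N)$ independent of $\alpha$ with $\|\psi_n\|_{L^\infty(K)}\le C\|\psi_n\|_{L^2(K')}$. Combined with the previous step this gives $\|\psi_n\|_{L^\infty(K)}\to0$ for every compact $K\subset\Omega$, which proves (i) (indeed local uniform convergence). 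I expect this upgrade — obtaining an interior bound with an $\alpha$-independent constant — to be the main obstacle; it works precisely because the ``potential'' $-\lambda_n$ is nonnegative, so the subharmonicity/Moser machinery has no bad dependence on $\lambda_n$.

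Given (i), parts (ii) and (iii) follow by splitting $\Omega$ into the thin collar $\Omega_\delta$ and its (compact) complement. For (ii), let $q<p$ and $\varepsilon>0$: first choose $\delta$ so small that $|\Omega_\delta|^{1/q-1/p}<\varepsilon/2$ (possible since $1/q-1/p>0$); on $\Omega\setminus\Omega_\delta$, (i) gives $\|\psi_n\|_{L^q(\Omega\setminus\Omega_\delta)}\le|\Omega|^{1/q}\|\psi_n\|_{L^\infty(\Omega\setminus\Omega_\delta)}\to0$, while Hölder gives $\|\psi_n\|_{L^q(\Omega_\delta)}\le|\Omega_\delta|^{1/q-1/p}\|\psi_n\|_{L^p(\Omega)}<\varepsilon/2$, so $\|\psi_n\|_{L^q(\Omega)}<\varepsilon$ for $\alpha$ large. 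For (iii), let $r>p$ and $M>0$: for fixed $\delta$, (i) gives $\|\psi_n\|_{L^p(\Omega\setminus\Omega_\delta)}\to0$, hence $\|\psi_n\|_{L^p(\Omega_\delta)}\ge\tfrac12$ for $\alpha$ large; Hölder with exponent $r/p$ on $\Omega_\delta$ then yields
\[
  \|\psi_n\|_{L^r(\Omega)}\ge\|\psi_n\|_{L^r(\Omega_\delta)}\ge|\Omega_\delta|^{-(1/p-1/r)}\|\psi_n\|_{L^p(\Omega_\delta)}\ge\tfrac12|\Omega_\delta|^{-(1/p-1/r)},
\]
and choosing $\delta$ small enough that $\tfrac12|\Omega_\delta|^{-(1/p-1/r)}\ge M$ (possible since $1/p-1/r>0$) and then $\alpha$ large shows $\|\psi_n\|_{L^r(\Omega)}\ge M$, i.e.\ $\|\psi_n\|_{L^r(\Omega)}\to\infty$.
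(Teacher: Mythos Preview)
Your argument is correct, but it proceeds differently from the paper. The paper obtains the $L^p_{loc}$ decay of (i) in a single stroke by testing the weak equation with the Moser-type function $\phi=\varphi^2|\psi_n|^{p-2}\psi_n$ and completing the square; this yields directly
\[
  \lambda_n \ge -(p-1)^{-1}\,\frac{\int_\Omega |\psi_n|^p|\nabla\varphi|^2\,dx}{\int_\Omega |\psi_n|^p\varphi^2\,dx},
\]
so that $\lambda_n\to-\infty$ forces $\int_{\Omega_0}|\psi_n|^p\to0$ immediately, with no detour through $L^2$ or subharmonicity. You instead run the $p=2$ Caccioppoli estimate and then bootstrap to $L^\infty_{loc}$ via Kato's inequality and the mean-value property for subharmonic functions. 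Your route is slightly longer and invokes more machinery, but it pays off: you actually prove the stronger conclusion $\psi_n\to0$ in $L^\infty_{loc}$, and you avoid having to check that $\varphi^2|\psi_n|^{p-2}\psi_n\in H^1(\Omega)$. For (ii) both proofs split $\Omega$ into a compact piece and a thin remainder and apply H\"older; for (iii) the paper uses a neat scale-invariant ratio argument (normalise in $L^r$, apply (ii), then renormalise), while you give a direct lower bound on the collar via H\"older---both are valid and of comparable length.
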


We will prove Theorem~\ref{th:asymp} in the next section and defer the
proof of Proposition~\ref{prop:tozero} until Section~\ref{sec:prop}.
We will use the result of Theorem~\ref{th:asymp} to obtain
Proposition~\ref{prop:tozero}; however, the former is only needed to
show that $\lambda_n (\alpha) \to -\infty$ as $\alpha \to \infty$.
Proposition~\ref{prop:tozero} is valid for Lipschitz domains whenever
we have this more general asymptotic behaviour.

\section{Proof of Theorem~\ref{th:asymp}}
\label{sec:th}

We first discuss the theory related to \eqref{eq:robin} that will be
needed to prove Theorem~\ref{th:asymp}. The form associated with
\eqref{eq:robin} is given by
\begin{displaymath}
  a(u,v) = \int_\Omega \nabla u\cdot \nabla v\,dx-\int_{\partial \Omega}
  \alpha uv\,dx,
\end{displaymath}
where $u, v \in H^1(\Omega)$. We understand eigenvalues $\lambda$ and
associated eigenfunctions $\psi$ of \eqref{eq:robin} in the weak
sense, as satisfying $a(\psi,v) = \lambda \langle \psi, v \rangle$ for
all $v \in H^1(\Omega)$. Here and throughout $\langle
\,.\,,\,.\,\rangle$ denotes the usual inner product on $L^2(\Omega)$.
The eigenfunctions $\psi_1, \psi_2, \ldots$ can be chosen orthogonal
in $L^2(\Omega)$. To see this, note first that if $\lambda_i \neq
\lambda_j$ for some $i,j \geq 1$, then $a (\psi_i, \psi_j) = \lambda_i
\langle \psi_i, \psi_j \rangle = \lambda_j \langle \psi_i, \psi_j
\rangle$ implies $\langle \psi_i, \psi_j \rangle = 0$. If instead
$\lambda_n$ is a repeated eigenvalue, we may apply the Gram-Schmidt
process to its eigenfunctions. We also impose the scaling $\|\psi_n
\|_{L^2(\Omega)} = 1$ in this section.  With the eigenvalues ordered
by increasing size and repeated according to their multiplicities, the
$n$th eigenvalue may be characterised variationally as
\begin{equation}
  \label{eq:minimax}
  \lambda_n (\alpha) = \inf_{0 \neq v \in M_n} 
  \frac{a(v,v)}{\|v\|_{L^2(\Omega)}^2},
\end{equation}
where $M_n$ is the subspace of $H^1(\Omega)$ of codimension $n-1$
obtained by taking the orthogonal complement of the $L^2$-span of the
first $n-1$ eigenfunctions $\psi_1, \ldots, \psi_{n-1}$ (see
\cite[Section~VI.1]{courant:53:mmp}). If we set $v_n := v -
\sum_{i=1}^{n-1} \langle v, \psi_i \rangle \psi_i$, then $v_n \in M_n$
and so provided $v_n \neq 0$, that is, provided $v$ is not in the
$L^2$-span of $\psi_1,\ldots,\psi_{n-1}$, we may use $v_n$ as a test
function in \eqref{eq:minimax} to estimate $\lambda_n$ from above.

We will use this representation, together with an appropriate choice
of $v$ and an induction argument on $n$, to prove
Theorem~\ref{th:asymp}. Our choice of test function is due to an
argument in \cite[Theorem~2.3]{giorgi:07:ee}, though also
cf.~\cite[Example~2.4]{levitin:08:rl}. We will assume throughout that
$\Omega \subset \R^N$ is bounded and $C^1$, although some of the
results, including the next lemma, are valid for Lipschitz domains
with the same proof.

\begin{lemma}
  \label{lemma:test}
  Let $d\in \R^N$, $\|d\| = 1$ be any unit vector. Set $u_d(x,
  \alpha):= c e^{\alpha x \cdot d} \in C^\infty(\R^N) \cap
  H^1(\Omega)$, where $c = c(d, \alpha)$ is a constant chosen so that
  $\|u_d\|_{L^2(\Omega)} = 1$. Then $a(u_d, u_d) \leq -\alpha^2$ for
  all $\alpha > 0$.
\end{lemma}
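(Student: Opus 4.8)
The plan is to compute $a(u_d, u_d)$ directly using the explicit form of $u_d$ and to exploit the fact that $u_d$ is an eigenfunction of $-\Delta$ on all of $\R^N$ with eigenvalue $-\alpha^2$. First I would note that $\nabla u_d = \alpha d\, u_d$, so that $|\nabla u_d|^2 = \alpha^2 u_d^2$ (using $\|d\| = 1$) and hence $\int_\Omega |\nabla u_d|^2\,dx = \alpha^2 \int_\Omega u_d^2\,dx = \alpha^2$, by the normalisation $\|u_d\|_{L^2(\Omega)} = 1$. Thus

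$$a(u_d, u_d) = \alpha^2 - \alpha \int_{\partial\Omega} u_d^2\,d\sigma,$$

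and it remains to show that $\alpha \int_{\partial\Omega} u_d^2\,d\sigma \geq 2\alpha^2$, i.e. $\int_{\partial\Omega} u_d^2\,d\sigma \geq 2\alpha$.

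The key step is to relate the boundary integral to the volume integral via the divergence theorem. Since $-\Delta u_d = -\alpha^2 u_d$ on $\R^N$, multiplying by $u_d$ and integrating over $\Omega$, or more directly applying the divergence theorem to the vector field $u_d^2 d$, I get
$$\int_{\partial\Omega} u_d^2\, (d\cdot\nu)\,d\sigma = \int_\Omega \divergence(u_d^2 d)\,dx = \int_\Omega 2 u_d (\nabla u_d\cdot d)\,dx = 2\alpha \int_\Omega u_d^2\,dx = 2\alpha.$$
This is almost what I want, except that the left-hand side has the factor $d\cdot\nu$, which is not $1$ and can even be negative. So this identity alone is not quite enough; I expect this mismatch to be the main obstacle. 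The way around it is to observe that $u_d^2(d\cdot\nu) \leq u_d^2$ pointwise on $\partial\Omega$ since $d\cdot\nu \leq |d|\,|\nu| = 1$, hence
$$\int_{\partial\Omega} u_d^2\,d\sigma \geq \int_{\partial\Omega} u_d^2\,(d\cdot\nu)\,d\sigma = 2\alpha,$$
which gives exactly $a(u_d, u_d) = \alpha^2 - \alpha \int_{\partial\Omega} u_d^2\,d\sigma \leq \alpha^2 - 2\alpha^2 = -\alpha^2$, as required.

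Two technical points need care. First, the divergence theorem applies on a Lipschitz (in particular $C^1$) domain with the outward unit normal $\nu$ defined $\sigma$-a.e., and $u_d^2 d \in C^\infty(\R^N)^N$ restricted to $\bar\Omega$ is certainly smooth enough, so there is no regularity issue. Second, the normalising constant $c = c(d,\alpha)$ is a genuine positive real number because $\int_\Omega e^{2\alpha x\cdot d}\,dx$ is finite and positive on the bounded set $\Omega$; it cancels consistently throughout since every term above is quadratic in $u_d$. I would present the argument in this order: compute $\nabla u_d$ and the gradient term, apply the divergence theorem to $u_d^2 d$ to evaluate the weighted boundary integral, bound $d\cdot\nu \leq 1$ to pass to the unweighted boundary integral, and finally assemble the pieces. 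The estimate is in fact uniform in $d$, which is what makes it useful for the minimax argument that follows.
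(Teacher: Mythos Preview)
Your argument is correct and is essentially the same as the paper's proof: the paper first rotates coordinates so that $d=(0,\ldots,0,1)$ and then applies the divergence theorem to the vector field $V=(0,\ldots,0,e^{2\alpha x_N})$, which is exactly your $u_d^2\,d$ (up to the normalising constant), using $\nu_N\leq 1$ where you use $d\cdot\nu\leq 1$. Your coordinate-free presentation is perhaps slightly cleaner, but the content is identical.
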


\begin{proof}
  For $x \in \R^N$ writing $x = (x_1, \ldots, x_N)$, we may without
  loss of generality rotate our coordinate system if necessary so that
  $d = (0,\ldots,0,1)$. In this case $u_d = ce^{\alpha x_N}$ and
  $\nabla u_d = (0, \ldots, 0, c\alpha e^{\alpha x_N})$. Hence
  \begin{displaymath}
    a(u_d,u_d) = c^2 \alpha^2\int_\Omega e^{2\alpha x_N}\,dx - c^2
    \alpha \int_{\partial \Omega} e^{2\alpha x_N}\,d\sigma.
  \end{displaymath}
  We will now use the divergence theorem on $V:= (0, \ldots, 0,
  e^{2\alpha x_N}) \in C^\infty (\R^N, \R^N)$ and the domain $\Omega$
  (see for example \cite[Th\'eor\`eme~3.1.1]{necas:67:ee}). Denoting
  the outer unit normal to $\Omega$ by $\nu_\Omega (x) = (\nu_1(x),
  \ldots, \nu_N(x))$, $x \in \partial \Omega$, we have
  \begin{displaymath}
    \begin{split}
      \int_{\partial\Omega} e^{2\alpha x_N}\,d\sigma 
      &\geq \int_{\partial\Omega} e^{2\alpha x_N} 
      \nu_N\,d\sigma = \int_{\partial\Omega} V\cdot\nu_\Omega\,d\sigma\\
      &=\int_\Omega \divergence V\,dx
      = 2\alpha \int_\Omega e^{2\alpha x_N}\,dx.
    \end{split}
  \end{displaymath}
  Multiplying through by $\alpha>0$ and combining this with the
  expression for $a(u_d,u_d)$ yields
  \begin{displaymath}
    a(u_d,u_d) \leq -\alpha^2 c^2 \int_\Omega e^{2\alpha x_N}\,dx
    = -\alpha^2,
  \end{displaymath}
  where the last equality follows from the definition of $c$.
\end{proof}

\begin{remark}
  \label{rem:halfspace}
  An easy calculation shows that the function $u(x):= e^{\alpha x_N}$
  is a positive eigenfunction, with eigenvalue $-\alpha^2$, of
  \eqref{eq:robin} on the half-space $T = \{x \in \R^N: x_N < 0 \}$.
\end{remark}

For $d \in \R^N$ a fixed unit vector and $n \geq 1$ also fixed, set
$u_{n+1}:= u_d - \sum_{i=1}^{n} \langle u_d, \psi_i \rangle \psi_i \in
M_{n+1}$. We will use $u_{n+1}$ as a test function in the variational
characterisation in order to establish \eqref{eq:asymp}. To that end,
we estimate $\lambda_{n+1}$ in terms of the previous $n$ eigenvalues
and functions.

\begin{lemma}
  \label{lemma:ind}
  Suppose $u_d \not\in \textrm{span}\{\psi_1, \ldots, \psi_n\}$. Then
  \begin{equation}
    \label{eq:ind}
    \lambda_{n+1}(\alpha) \leq \frac{-\alpha^2
      -\sum_{i=1}^{n}\lambda_i \langle u_d,\psi_i \rangle^2}
    {1 - \sum_{i=1}^{n} \langle u_d,\psi_i \rangle^2}.
  \end{equation}
\end{lemma}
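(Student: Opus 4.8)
The plan is to use $u_{n+1}$ as a test function in the variational characterisation \eqref{eq:minimax} for $\lambda_{n+1}$ and to simplify the resulting Rayleigh quotient using the orthogonality of the eigenfunctions together with Lemma~\ref{lemma:test}. First I would compute the denominator: since the $\psi_i$ are orthonormal in $L^2(\Omega)$ and $u_{n+1} = u_d - \sum_{i=1}^n \langle u_d,\psi_i\rangle \psi_i$ is precisely the projection of $u_d$ onto $M_{n+1}$, Pythagoras gives $\|u_{n+1}\|_{L^2(\Omega)}^2 = \|u_d\|_{L^2(\Omega)}^2 - \sum_{i=1}^n \langle u_d,\psi_i\rangle^2 = 1 - \sum_{i=1}^n \langle u_d,\psi_i\rangle^2$, using the normalisation $\|u_d\|_{L^2(\Omega)} = 1$ from Lemma~\ref{lemma:test}. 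The hypothesis $u_d \notin \mathrm{span}\{\psi_1,\ldots,\psi_n\}$ guarantees this is strictly positive, so $u_{n+1} \neq 0$ and is an admissible test function.

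The main computation is the numerator $a(u_{n+1},u_{n+1})$. Expanding by bilinearity and symmetry of $a$,
\begin{displaymath}
  a(u_{n+1},u_{n+1}) = a(u_d,u_d) - 2\sum_{i=1}^n \langle u_d,\psi_i\rangle\, a(u_d,\psi_i) + \sum_{i,j=1}^n \langle u_d,\psi_i\rangle \langle u_d,\psi_j\rangle\, a(\psi_i,\psi_j).
\end{displaymath}
Now I would use that each $\psi_i$ is a weak eigenfunction, so $a(u_d,\psi_i) = a(\psi_i,u_d) = \lambda_i \langle \psi_i, u_d\rangle$ and $a(\psi_i,\psi_j) = \lambda_i \langle \psi_i,\psi_j\rangle = \lambda_i \delta_{ij}$ (the latter by the $L^2$-orthonormality established in the text). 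Substituting, the cross term becomes $-2\sum_{i=1}^n \lambda_i \langle u_d,\psi_i\rangle^2$ and the last term becomes $+\sum_{i=1}^n \lambda_i \langle u_d,\psi_i\rangle^2$, so the two combine to $-\sum_{i=1}^n \lambda_i \langle u_d,\psi_i\rangle^2$. Hence $a(u_{n+1},u_{n+1}) = a(u_d,u_d) - \sum_{i=1}^n \lambda_i \langle u_d,\psi_i\rangle^2 \leq -\alpha^2 - \sum_{i=1}^n \lambda_i \langle u_d,\psi_i\rangle^2$ by Lemma~\ref{lemma:test}.

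Dividing the numerator bound by the denominator and invoking \eqref{eq:minimax} with the test function $u_{n+1} \in M_{n+1}$ yields \eqref{eq:ind} immediately. There is no serious obstacle here; the only points requiring care are making sure the test function is nonzero (handled by the hypothesis) and correctly applying the weak eigenvalue identity to pair $u_d$ against $\psi_i$ — this is legitimate precisely because $u_d \in H^1(\Omega)$, which is part of the statement of Lemma~\ref{lemma:test}. The minor subtlety of whether $a(u_d,\psi_i)$ or $a(\psi_i,u_d)$ is the "right" form to expand is resolved by the symmetry of $a$.
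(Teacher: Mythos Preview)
Your proposal is correct and follows essentially the same route as the paper: use $u_{n+1}\in M_{n+1}$ as a test function in \eqref{eq:minimax}, compute the denominator via orthonormality, expand $a(u_{n+1},u_{n+1})$ by bilinearity using $a(\psi_i,v)=\lambda_i\langle\psi_i,v\rangle$ and $a(\psi_i,\psi_j)=\lambda_i\delta_{ij}$, and then apply Lemma~\ref{lemma:test}. Your handling of the nonvanishing of $u_{n+1}$ and the legitimacy of pairing $u_d\in H^1(\Omega)$ against $\psi_i$ is exactly what the paper does (implicitly or explicitly).
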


\begin{proof}
  Since $u_d$ is not a linear combination of the first $n$
  eigenfunctions, we can use $u_{n+1} = u_d - \sum_{i=1}^{n} \langle
  u_d, \psi_i \rangle {\psi_i} \not\equiv 0$ as a test function in
  \eqref{eq:minimax}. A simple calculation using the orthonormality
  of the eigenfunctions shows that
  \begin{displaymath}
    0<\langle u_{n+1},u_{n+1}\rangle
    =1-\sum_{i=1}^{n}\langle u_d,\psi_i\rangle^2.
  \end{displaymath}
  We now estimate $a(u_{n+1},u_{n+1})$. Using the definition of
  $u_{n+1}$ and the bilinearity of the form $a$, we see that
  $a(u_{n+1},u_{n+1})$ is given by
  \begin{displaymath}
    a(u_d,u_d)-2\sum_{i=1}^n \langle u_d,\psi_i
    \rangle\, a(u_d,\psi_i)\\ + \sum_{i=1}^n \sum_{j=1}^n \langle u_d,
    \psi_i \rangle^2 a(\psi_i,\psi_j).
  \end{displaymath}
  Since $a(u_d,\psi_i) = \lambda_i \langle u_d,\psi_i \rangle$, and
  since $a(\psi_i, \psi_j) = \lambda_i$ if $i=j$ and $0$ otherwise, we
  obtain
  \begin{displaymath}
    a(u_{n+1},u_{n+1}) = a(u_d,u_d) - \sum_{i=1}^n \lambda_i 
    \langle u_d, \psi_i \rangle^2.
  \end{displaymath}
  (Cf.~the abstract theory in \cite[Section~I.6.10]{kato:76:pt}.)
  Using the estimate of $a(u_d,u_d)$ from Lemma~\ref{lemma:test} and
  putting everything together yields
  \begin{displaymath}
    \lambda_{n+1}(\alpha) \leq \frac{a(u_{n+1},u_{n+1})}{\|u_{n+1}
      \|_{L^2(\Omega)}^2} \leq \frac{-\alpha^2
      -\sum_{i=1}^{n}\lambda_i \langle u_d,\psi_i \rangle^2}
    {1 - \sum_{i=1}^{n} \langle u_d,\psi_i \rangle^2},
  \end{displaymath}
  establishing \eqref{eq:ind}.
\end{proof}

Roughly speaking, to prove Theorem~\ref{th:asymp} using the estimate
of $\lambda_{n+1}$ in Lemma~\ref{lemma:ind} we have to prove that we
can find a direction $d$ such that $\langle u_d, \psi_i \rangle$ stays
small as $\alpha \to \infty$ for all $1 \leq i \leq n$.  To that end
we will study the functions $u_d$ more carefully. We start by
observing that, for any given $\alpha>0$, the upper level sets of
$u_d$ are restrictions to $\Omega$ of half-planes of the form $\{x \in
\R^N: x \cdot d > \kappa \}$, where $\kappa \in \R$. The key place
where we will use the assumption that $\Omega$ has $C^1$ boundary is
in parts (iii) and (iv) of the next lemma.

\begin{lemma}
  \label{lemma:rot}
  Let $d \in \R^N$, $\|d\|=1$. For $\kappa \in \R$ set
  \begin{equation}
    \label{eq:usets}
    \begin{aligned}
      {U_d}(\kappa)&:= \{x \in \Omega: x \cdot d > \kappa\},\\
      {\kappa_d}&:= \sup \{\kappa \in \R: U_d (\kappa) \neq \emptyset\},\\
      {K_d}&:= \{x \in \overline \Omega: x \cdot d = \kappa_d \}.
    \end{aligned}
  \end{equation}
  Then
  \begin{itemize}
  \item[(i)] the $U_d (\kappa)$ are open, nested (i.e.~$U_d(\kappa)
    \subset U_d (\kappa')$ if $\kappa > \kappa'$), nonempty if and
    only if $\kappa< \kappa_d$, and $0 \neq |U_d (\kappa)| \to 0$ as
    $\kappa \to \kappa_d$ from below;
  \item[(ii)] $\emptyset \neq K_d \subset \partial \Omega$;
  \item[(iii)] if $z \in K_d$, then $d = \nu_\Omega(z)$, the outer
    unit normal to $\Omega$ at $z$;
  \item[(iv)] if $d \neq e \in \R^N$, $\|e\|=1$ is another unit vector
    with $U_e(\kappa)$ and $\kappa_e$ defined as in \eqref{eq:usets},
    then there exists $\varepsilon > 0$ such that $U_d (\kappa) \cap
    U_e (\tilde \kappa) = \emptyset$ for all $\kappa \in (\kappa_d -
    \varepsilon, \kappa_d)$ and all $\tilde\kappa \in (\kappa_e -
    \varepsilon, \kappa_e)$.
  \end{itemize}
\end{lemma}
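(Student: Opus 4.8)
The plan is to establish the four parts in order; parts (i) and (ii) are elementary consequences of the compactness of $\overline\Omega$ and the openness of $\Omega$, part (iii) is the one place the $C^1$ hypothesis is genuinely used, and part (iv) then follows from (iii) by a compactness argument. For (i): the $U_d(\kappa)$ are open as intersections of $\Omega$ with open half-spaces, and nestedness is immediate from the definition. Since $\Omega$ is bounded, $x\mapsto x\cdot d$ is bounded on $\Omega$, so $\kappa_d$ is finite; moreover no point of the open set $\Omega$ can satisfy $x\cdot d\geq\kappa_d$, since a small ball around such a point would lie in $\Omega$ and contain points with $x\cdot d>\kappa_d$, contradicting the definition of $\kappa_d$. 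Together with the definition of the supremum this gives $U_d(\kappa)\neq\emptyset$ if and only if $\kappa<\kappa_d$. Finally $|U_d(\kappa)|>0$ because $U_d(\kappa)$ is open and nonempty, while the sets $U_d(\kappa)$ decrease as $\kappa\uparrow\kappa_d$ to $\bigcap_{\kappa<\kappa_d}U_d(\kappa)=\{x\in\Omega:x\cdot d\geq\kappa_d\}=\emptyset$, so continuity of Lebesgue measure from above — valid since $|\Omega|<\infty$ — yields $|U_d(\kappa)|\to0$. For (ii), the continuous function $x\mapsto x\cdot d$ attains a maximum $m$ over the compact set $\overline\Omega$ at some $z$; approximating $z$ by a sequence in $\Omega$ shows $U_d(\kappa)\neq\emptyset$ for all $\kappa<m$, so $\kappa_d=m$ and $z\in K_d$, whence $K_d\neq\emptyset$; and any point of $K_d\cap\Omega$ is excluded by the same ball argument, so $K_d\subset\overline\Omega\setminus\Omega=\partial\Omega$.

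For (iii), fix $z\in K_d$. By part (ii), $z$ maximises $x\mapsto x\cdot d$ over $\overline\Omega$, so the hyperplane $\{x:x\cdot d=\kappa_d\}$ supports $\Omega$ at $z$. Using a $C^1$ chart, after rotating coordinates we may write $\partial\Omega$ near $z$ as a graph $x_N=\varphi(x')$ with $\varphi\in C^1$, $\varphi(0)=0$ and $\nabla\varphi(0)=0$, with $\Omega$ locally equal to $\{x_N<\varphi(x')\}$, so that $\nu_\Omega(z)=e_N$. Writing $d=(d',d_N)$, the $C^1$ function $x'\mapsto(x',\varphi(x'))\cdot d=x'\cdot d'+\varphi(x')\,d_N$ has a local maximum at $x'=0$, because nearby boundary points lie in $\overline\Omega$; hence its gradient $d'+d_N\nabla\varphi(0)=d'$ vanishes, so $d=\pm e_N$. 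The choice $d=-e_N$ is impossible, since $\Omega$ contains points with small $x_N<0$ near $z$, at which $x\cdot d=-x_N>0=z\cdot d$. Therefore $d=e_N=\nu_\Omega(z)$.

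For (iv), part (iii) forces $K_d\cap K_e=\emptyset$: a common point $z$ would give $d=\nu_\Omega(z)=e$, contrary to $d\neq e$. Both $K_d$ and $K_e$ are closed subsets of the compact set $\overline\Omega$, hence compact, so $\delta:=\mathrm{dist}(K_d,K_e)>0$. A compactness argument then shows that $U_d(\kappa)$ lies in the $\delta/2$-neighbourhood of $K_d$ once $\kappa$ is close enough to $\kappa_d$: otherwise one could find $\kappa_j\uparrow\kappa_d$ and $x_j\in U_d(\kappa_j)$ with $\mathrm{dist}(x_j,K_d)\geq\delta/2$, and a subsequential limit $x_*$ would satisfy $x_*\in\overline\Omega$ and $\kappa_d\geq x_*\cdot d\geq\lim_j\kappa_j=\kappa_d$, so $x_*\in K_d$, contradicting $\mathrm{dist}(x_*,K_d)\geq\delta/2$. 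Applying this to both $d$ and $e$ and letting $\varepsilon$ be the smaller of the two resulting thresholds, the caps $U_d(\kappa)$ and $U_e(\tilde\kappa)$ for $\kappa\in(\kappa_d-\varepsilon,\kappa_d)$ and $\tilde\kappa\in(\kappa_e-\varepsilon,\kappa_e)$ lie in the disjoint $\delta/2$-neighbourhoods of $K_d$ and $K_e$, and are therefore disjoint.

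The main obstacle is part (iii): one must argue carefully that at a $C^1$ boundary point a supporting hyperplane necessarily coincides with the tangent hyperplane — this is exactly what the graph computation above makes precise, via the vanishing of $\nabla\varphi(0)$ — and then fix the orientation correctly so that $d$ is the outward rather than the inward normal. The compactness arguments in the other parts are routine, though throughout one must be careful to distinguish $\Omega$ from $\overline\Omega$.
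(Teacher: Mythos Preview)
Your proof is correct and follows essentially the same approach as the paper's: parts (i)--(ii) are elementary, part (iii) uses the $C^1$ hypothesis to identify $d$ with the outer normal at any point of $K_d$, and part (iv) is a compactness argument reducing to $K_d\cap K_e=\emptyset$. The only cosmetic differences are that in (iii) the paper rotates so that $d=e_N$ and then observes the tangent plane must be horizontal (whereas you rotate via a chart so that $\nu_\Omega(z)=e_N$ and compute that $d=e_N$), and in (iv) the paper argues by a single direct contradiction rather than first quantifying the separation via $\delta=\mathrm{dist}(K_d,K_e)$.
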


\begin{proof}
  (i) is obvious. For (ii), to show $K_d \neq \emptyset$ we note that
  $K_d = \cap_{\kappa < \kappa_d} \overline{U_d(\kappa)}$, that is,
  $K_d$ is the intersection of nested, compact and nonempty sets. That
  $K_d \subset \partial \Omega$ is immediate from the definitions and
  the fact that the $U_d$ are open. For (iii), we assume as in the
  proof of Lemma~\ref{lemma:test} that $d = (0,\ldots,0,1)$, so that
  $U_d(\kappa) = \{ x \in \Omega: x_N > \kappa \}$. Then $z =
  (z_1,\ldots, z_N) \in K_d$ means $z_N = \kappa_d$, that is, $z_N =
  \max \{x_N: x \in \overline \Omega \}$. Since $\Omega$ is $C^1$,
  this means the tangent plane to $\Omega$ at $z \in K_d$ must be
  horizontal. Thus $\nu_\Omega(z)$ points in the direction $x_N$, that
  is, $\nu_\Omega(z) = (0,\ldots,0,1)$.  For (iv), suppose for a
  contradiction that there exist $\kappa_j \nearrow \kappa_d$ and
  $\tilde\kappa_j \nearrow \kappa_e$ such that, for each $j \geq 1$,
  there exists $x_j \in U_d (\kappa_j) \cap U_e (\tilde\kappa_j)$.
  Since $\overline \Omega$ is compact, a subsequence of the $x_j$
  converges to some $z \in \overline \Omega$.  Since $x_j \in
  U_d(\kappa_j)$ and $\cap_{j \geq 1} \overline{U_d(\kappa_j)} = K_d$,
  we see $z \in K_d$. By a similar argument, $z \in K_e$.  This
  contradicts (iii) since $d \neq e$.
\end{proof}

We now show that for $d$ fixed, all the mass of $u_d$ becomes
concentrated in an arbitrarily small region of $\Omega$ as $\alpha \to
\infty$.

\begin{lemma}
  \label{lemma:u}
  Let $d \in \R^N$ and $u_d (x) = c e^{\alpha x \cdot d}$ be as in
  Lemma~\ref{lemma:test} and let $U_d(\kappa)$ and $\kappa_d$ be as in
  Lemma~\ref{lemma:rot}. For every $\varepsilon > 0$ and $\kappa' <
  \kappa_d$ there exists $\alpha_\varepsilon := \alpha (\varepsilon,
  \kappa') > 0$ such that
  \begin{equation}
    \label{eq:ubound}
    \|u_d\|_{L^2 (\Omega \setminus U_d(\kappa'))}^2 < \varepsilon
  \end{equation}
  for all $\alpha > \alpha_\varepsilon$.
\end{lemma}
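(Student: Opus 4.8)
The plan is to reduce everything to a one-dimensional estimate in the last coordinate. As in the proof of Lemma~\ref{lemma:test}, rotate coordinates so that $d = (0,\ldots,0,1)$; then $u_d(x,\alpha) = ce^{\alpha x_N}$ with $c = c(\alpha) > 0$ determined by the normalisation $c^{-2} = \int_\Omega e^{2\alpha x_N}\,dx$, and $\kappa_d = \max\{x_N : x \in \overline\Omega\}$ by definition. With this notation $\Omega \setminus U_d(\kappa') = \{x \in \Omega : x_N \le \kappa'\}$, so
\begin{displaymath}
  \|u_d\|_{L^2(\Omega\setminus U_d(\kappa'))}^2
  = \frac{\int_{\{x\in\Omega:\,x_N\le\kappa'\}} e^{2\alpha x_N}\,dx}
         {\int_\Omega e^{2\alpha x_N}\,dx},
\end{displaymath}
and it suffices to show this quotient tends to $0$ as $\alpha \to \infty$.

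For the numerator I would simply estimate $e^{2\alpha x_N} \le e^{2\alpha\kappa'}$ on $\{x_N \le \kappa'\}$, giving the bound $|\Omega|\,e^{2\alpha\kappa'}$. For the denominator I would insert an intermediate level: pick any $\kappa''$ with $\kappa' < \kappa'' < \kappa_d$. By Lemma~\ref{lemma:rot}(i) the set $U_d(\kappa'')$ is nonempty (since $\kappa'' < \kappa_d$) and open, hence of strictly positive Lebesgue measure; since $e^{2\alpha x_N} \ge e^{2\alpha\kappa''}$ on $U_d(\kappa'')$, the denominator is at least $|U_d(\kappa'')|\,e^{2\alpha\kappa''}$. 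Combining the two estimates gives
\begin{displaymath}
  \|u_d\|_{L^2(\Omega\setminus U_d(\kappa'))}^2
  \le \frac{|\Omega|}{|U_d(\kappa'')|}\,e^{-2\alpha(\kappa''-\kappa')},
\end{displaymath}
which converges to $0$ as $\alpha \to \infty$ because $\kappa'' - \kappa' > 0$. Given $\varepsilon > 0$, choosing $\alpha_\varepsilon = \alpha(\varepsilon,\kappa')$ large enough that the right-hand side is smaller than $\varepsilon$ yields \eqref{eq:ubound}.

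There is no serious obstacle here: the argument is just the observation that the exponential $e^{\alpha x_N}$ concentrates asymptotically all of its $L^2$-mass near the maximal value of $x_N$, and the only point requiring care is making sure the reference slice $U_d(\kappa'')$ has positive measure, which is exactly what part (i) of Lemma~\ref{lemma:rot} supplies. One should also note that the initial rotation is harmless, since it preserves both the $C^1$ regularity of $\Omega$ and all the Lebesgue measures appearing in the estimate; in fact the $C^1$ hypothesis is not used in this lemma at all.
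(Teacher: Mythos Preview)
Your proof is correct and follows essentially the same approach as the paper: bound the integral over $\Omega\setminus U_d(\kappa')$ above by $|\Omega|e^{2\alpha\kappa'}$, insert an intermediate level $\kappa''\in(\kappa',\kappa_d)$ to bound the normalising integral below by $|U_d(\kappa'')|e^{2\alpha\kappa''}$, and conclude from $\kappa''>\kappa'$. The only cosmetic difference is that you phrase it as a quotient after dividing out the normalisation constant, whereas the paper carries the constant $c$ through the two estimates separately; your remark that the $C^1$ hypothesis is not actually used here is also accurate.
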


\begin{proof}
  Since $u_d (x) \leq c e^{\alpha \kappa'}$ for all $x \in \Omega
  \setminus U_d (\kappa')$, we have
  \begin{displaymath}
    \|u_d\|_{L^2 (\Omega \setminus U_d(\kappa'))}^2
    \leq ce^{2\alpha \kappa'}|\Omega|.
  \end{displaymath}
  Choose $\kappa'' \in (\kappa', \kappa_d)$.  Then $U_d (\kappa'')
  \subset U_d (\kappa')$ with $|U_d (\kappa'')| \neq 0$ and
  \begin{displaymath}
    1=\|u_d\|_{L^2(\Omega)}^2\geq\|u_d\|_{L^2(U_d(\kappa''))}^2
    \geq c e^{2\alpha \kappa''}|U_d (\kappa'')|.
  \end{displaymath}
  For $\varepsilon > 0$ fixed, choose $\alpha_\varepsilon > 0$ such
  that
  \begin{equation}
    \label{eq:alphaep}
    e^{2\alpha_\varepsilon \kappa'}|\Omega| < \varepsilon 
    e^{2\alpha_\varepsilon \kappa''}|U_d(\kappa'')|,
  \end{equation}
  which we can do since $\kappa' < \kappa''$.  Then
  \eqref{eq:alphaep} will hold uniformly in $\alpha >
  \alpha_\varepsilon$ and so
  \begin{displaymath}
    \|u_d\|_{L^2 (\Omega \setminus U_d(\kappa'))}^2 <
    c e^{2\alpha \kappa'}|\Omega| < \varepsilon 
    c e^{2\alpha \kappa''}|U_d(\kappa'')| < \varepsilon
  \end{displaymath}
  for all $\alpha > \alpha_\varepsilon$.
\end{proof}

Lemma~\ref{lemma:u} implies that for fixed $d$, $u_d \rightharpoonup
0$ weakly in $L^2 (\Omega)$ as $\alpha \to \infty$; it turns out that
the same is true of the $\psi_i$ (see Proposition~\ref{prop:tozero}).
But this is not enough to show directly that $\langle u_d, \psi_i
\rangle$ is uniformly small, since both $u_d$ and $\psi_i$ vary with
$\alpha$.  Instead, we will use the following rather technical result
concerning the $u_d$. Since this does not use any specific properties
of the $\psi_i$, we set it up so it works for arbitrary
$L^2$-functions.

\begin{lemma}
  \label{lemma:finite}
  Fix $n \geq 1$ and $\delta>0$. Suppose we have a sequence $\alpha_k
  \to \infty$ and for each $k \in \N$ a family of $n$ functions
  $\varphi_i (k) \in L^2(\Omega)$, $1 \leq i \leq n$, such that
  $\|\varphi_i (k)\|_{L^2(\Omega)} = 1$ for all $1 \leq i \leq n$ and
  $k \in \N$.  Then there exists a unit vector $d \in \R^N$ and a
  subsequence $\alpha_{k_l} \to \infty$ of the $(\alpha_k)$ such that
  \begin{equation}
    \label{eq:phibound}
    \sum_{i=1}^n \langle u_d(k_l), \varphi_i(k_l) \rangle^2 \leq \delta,
  \end{equation}
  for all $l \in \N$, where $u_d(k_l) = u_d(x, \alpha_{k_l})$ is as in
  Lemma~\ref{lemma:test}.
\end{lemma}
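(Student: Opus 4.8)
The plan is to exploit the geometric separation property (iv) of Lemma~\ref{lemma:rot} together with the mass-concentration estimate of Lemma~\ref{lemma:u}. First I would fix a large integer $m$ (to be chosen in terms of $n$ and $\delta$) and pick $m$ distinct unit vectors $d^{(1)}, \dots, d^{(m)} \in \R^N$. By repeated application of part (iv) of Lemma~\ref{lemma:rot}, applied to each pair $(d^{(j)}, d^{(j')})$ with $j \neq j'$, there exists a single $\varepsilon > 0$ such that the ``caps'' $U_{d^{(j)}}(\kappa)$ with $\kappa \in (\kappa_{d^{(j)}} - \varepsilon, \kappa_{d^{(j)}})$ are pairwise disjoint across different $j$; write $\kappa'_j := \kappa_{d^{(j)}} - \varepsilon/2$ and let $C_j := U_{d^{(j)}}(\kappa'_j)$, so that $C_1, \dots, C_m$ are pairwise disjoint open subsets of $\Omega$.

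The second step is the key estimate. For any fixed $j$ and any $k$, split
\begin{displaymath}
  \langle u_{d^{(j)}}(k), \varphi_i(k) \rangle
  = \int_{C_j} u_{d^{(j)}}(k)\,\varphi_i(k)\,dx
  + \int_{\Omega \setminus C_j} u_{d^{(j)}}(k)\,\varphi_i(k)\,dx.
\end{displaymath}
By Cauchy--Schwarz the second integral is bounded in absolute value by $\|u_{d^{(j)}}(k)\|_{L^2(\Omega \setminus C_j)} \cdot \|\varphi_i(k)\|_{L^2(\Omega)} = \|u_{d^{(j)}}(k)\|_{L^2(\Omega \setminus C_j)}$, which by Lemma~\ref{lemma:u} (with $\kappa' = \kappa'_j < \kappa_{d^{(j)}}$) tends to $0$ as $\alpha_k \to \infty$; so for $k$ large it is at most, say, $\eta$, uniformly in $i$. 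For the first integral, again by Cauchy--Schwarz,
\begin{displaymath}
  \Bigl| \int_{C_j} u_{d^{(j)}}(k)\,\varphi_i(k)\,dx \Bigr|^2
  \leq \|u_{d^{(j)}}(k)\|_{L^2(C_j)}^2 \cdot \|\varphi_i(k)\|_{L^2(C_j)}^2
  \leq \|\varphi_i(k)\|_{L^2(C_j)}^2,
\end{displaymath}
since $\|u_{d^{(j)}}(k)\|_{L^2(\Omega)} = 1$. Now fix $k$ large (larger than all the finitely many thresholds coming from the $m$ applications of Lemma~\ref{lemma:u}, one for each $j$). Because the $C_j$ are pairwise disjoint and each $\varphi_i(k)$ has $L^2$-norm $1$ on $\Omega$, we have $\sum_{j=1}^m \|\varphi_i(k)\|_{L^2(C_j)}^2 \leq 1$ for every $i$, hence $\sum_{j=1}^m \sum_{i=1}^n \|\varphi_i(k)\|_{L^2(C_j)}^2 \leq n$. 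Therefore there is at least one index $j = j(k)$ with $\sum_{i=1}^n \|\varphi_i(k)\|_{L^2(C_j)}^2 \leq n/m$. Combining the two bounds, for that $j$,
\begin{displaymath}
  \sum_{i=1}^n \langle u_{d^{(j)}}(k), \varphi_i(k) \rangle^2
  \leq \sum_{i=1}^n \bigl( \|\varphi_i(k)\|_{L^2(C_j)} + \eta \bigr)^2
  \leq 2\sum_{i=1}^n \|\varphi_i(k)\|_{L^2(C_j)}^2 + 2n\eta^2
  \leq \frac{2n}{m} + 2n\eta^2.
\end{displaymath}
Choosing $m$ with $2n/m < \delta/2$ and $\eta$ with $2n\eta^2 < \delta/2$ makes this $\leq \delta$.

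The final step handles the requirement that a \emph{single} $d$ work along a subsequence. The index $j(k) \in \{1, \dots, m\}$ produced above depends on $k$, but it takes only finitely many values, so by the pigeonhole principle there is a fixed value $j^\ast$ attained for infinitely many $k$; let $(\alpha_{k_l})$ enumerate those $k$ (intersected with the set of $k$ exceeding all the Lemma~\ref{lemma:u} thresholds) and set $d := d^{(j^\ast)}$. Then \eqref{eq:phibound} holds for all $l$. I expect the main subtlety to be organising the quantifiers cleanly: one must choose $m$ and $\eta$ \emph{first} (depending only on $n$, $\delta$), then get $\varepsilon$ and the sets $C_j$ from Lemma~\ref{lemma:rot}(iv), then get the finitely many $\alpha$-thresholds from Lemma~\ref{lemma:u}, and only then run the pigeonhole argument on $k$; the estimates themselves are just two applications of Cauchy--Schwarz plus the additivity of $L^2$-mass over disjoint sets.
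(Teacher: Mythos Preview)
Your proposal is correct and follows essentially the same route as the paper: pick $m$ distinct directions with pairwise disjoint caps (the paper invokes Lemma~\ref{lemma:rot} implicitly where you cite part~(iv) explicitly), use Lemma~\ref{lemma:u} to make the mass of $u_{d^{(j)}}$ outside $C_j$ small, pigeonhole on the total $L^2$-mass of the $\varphi_i$ over the disjoint caps to find a good $j(k)$, and then pigeonhole again over the finitely many values of $j(k)$ to extract a fixed direction and subsequence. The only cosmetic difference is in the final arithmetic: the paper bounds $\sum_i\langle u_{j_k},\varphi_i\rangle^2$ by $n\bigl((n/m)^{1/2}+\varepsilon^{1/2}\bigr)^2$, whereas you use $(a+b)^2\le 2a^2+2b^2$ to get $2n/m+2n\eta^2$; both lead to the same conclusion after choosing $m$ large and $\eta$ (resp.\ $\varepsilon$) small.
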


\begin{proof}
  Fix $n \geq 1$, $\delta>0$ and a sequence $\alpha_k \to \infty$.
  Choose $m \geq 1$ and $\varepsilon > 0$, to be specified precisely
  later on. Now choose any $m$ distinct unit vectors $d_j \in \R^N$,
  $1 \leq j \leq m$, and for each $j$ let $u_j := u_{d_j}(x,
  \alpha_k)$ be as in Lemma~\ref{lemma:test}. For each $j$ choose a
  nonempty open set $U_j := U_{d_j} (\kappa_j)$ as in
  Lemma~\ref{lemma:rot}.  By making an appropriate choice of
  $\kappa_j$ we may assume the $U_j$ are pairwise disjoint. Using
  Lemma~\ref{lemma:u}, we find an $\alpha_\varepsilon > 0$ such that
  \begin{displaymath}
    \|u_j\|_{L^2(\Omega \setminus U_j)}^2 < \varepsilon
  \end{displaymath}
  for all $\alpha > \alpha_\varepsilon$ and all $1 \leq j \leq m$. By
  discarding at most finitely many $k$, we may assume $\alpha_k >
  \alpha_\varepsilon$ for all $k \in \N$. Now for each $k \in \N$, we
  have
  \begin{displaymath}
    \int_\Omega \sum_{i=1}^n |\varphi_i(k)|^2\,dx = \sum_{i=1}^n
    \|\varphi_i(k)\|_{L^2(\Omega)}^2 = n.
  \end{displaymath}
  Since the $U_j$ are disjoint, it follows that for each $k \in \N$,
  there exists at least one $j = j_k$ such that
  \begin{displaymath}
    \int_{U_{j_k}} \sum_{i=1}^n |\varphi_i(k)|^2\,dx \leq \frac{n}{m}.
  \end{displaymath}
  For this $j_k$, using H\"older's inequality, for each $1 \leq i \leq
  n$ we have
  \begin{displaymath}
    \begin{split}
      |\langle u_{j_k},\varphi_i(k)\rangle| &\leq \int_{U_{j_k}} |u_j 
      \varphi_i|\,dx + \int_{\Omega \setminus U_{j_k}} |u_j\varphi_i|\,dx\\
      &\leq \|u_j\|_{L^2(\Omega)}\Bigl(\frac{n}{m}\Bigr)^\frac{1}{2} +
      \varepsilon^\frac{1}{2}\|u_j\|_{L^2(\Omega)}\|\varphi_i\|_{L^2(\Omega)}\\
      &=\Bigl(\frac{n}{m} \Bigr)^\frac{1}{2}+\varepsilon^\frac{1}{2},
    \end{split}
  \end{displaymath}
  where we have used the bound $\int_{U_j} |\varphi_i|^2\,dx
  \leq n/m$. We now specify $m \geq 1$ and
  $\varepsilon > 0$ to be such that
  \begin{displaymath}
    n \Bigl(\Bigl(\frac{n}{m} \Bigr)^\frac{1}{2}+\varepsilon^\frac{1}{2}
    \Bigr)^2 \leq \delta,
  \end{displaymath}
  noting that this depends only on $n$ and $\delta$. Squaring the
  above estimate for $|\langle u_{j_k}, \varphi_i(k) \rangle |$ and
  summing over $i$, this implies that for all but finitely many $k \in
  \N$, \eqref{eq:phibound} holds for at least one of the $m$ fixed
  $u_j$.

  By a simple counting argument, there must exist at least one $j^*$
  between $1$ and $m$ such that \eqref{eq:phibound} holds for this
  fixed $u_{j^*}$ and infinitely many $\alpha_k$. This gives us our
  $u_d$ and $(\alpha_{k_l})$.
\end{proof}

\begin{proof}[Proof of Theorem~\ref{th:asymp}]
  The proof is by induction on $n$. The step when $n=1$ is given by
  \cite[Theorem~1.1]{lou:04:spe}.  Now fix $n \geq 1$ and suppose we
  know that for all $1 \leq i \leq n$, $-\lambda_i(\alpha_k)
  /\alpha_k^2 \to 1$ as $k \to \infty$ for every sequence $\alpha_k
  \to \infty$. It suffices to prove that for every such sequence
  $\alpha_k \to \infty$, there exists a subsequence $\alpha_{k_l} \to
  \infty$ such that $-\lambda_{n+1}(\alpha_{k_l})/\alpha_{k_l}^2 \to
  1$ as $l \to \infty$.

  So fix a particular sequence $\alpha_k \to \infty$ and also fix $0 <
  \delta < 1$.  Let $u_d$ satisfy the conclusion of
  Lemma~\ref{lemma:finite} for a subsequence which we will still
  denote by $(\alpha_k)$, this $\delta > 0$ and the family of $n$
  functions $\psi_i (\alpha_k) =: \varphi_i(k)$, $1 \leq i \leq n$.
  Then by Lemma~\ref{lemma:finite} we know that
  \begin{equation}
    \label{eq:psibound}
    \sum_{i=1}^n \langle u_d(\alpha_k), \psi_i(\alpha_k)\rangle^2\leq\delta
  \end{equation}
  for all $k \in \N$ and the fixed direction $d$. In particular,
  \eqref{eq:psibound} implies $u_d \not\in \textrm{span}
  \{\psi_1(\alpha_k), \ldots, \psi_n (\alpha_k) \}$ for any $k \in
  \N$, since $\delta < 1$.  Applying Lemma~\ref{lemma:ind} to $u_d$
  for each $k \in \N$, we obtain
  \begin{displaymath}
    \lambda_{n+1}(\alpha_k)\leq\frac{-\alpha_k^2
      - \sum_{i=1}^n \lambda_i \langle u_d, \psi_i\rangle^2}
    {1 - \sum_{i=1}^n \langle u_d, \psi_i\rangle^2}
  \end{displaymath}
  for all $k \in \N$. This implies
  \begin{equation}
    \label{eq:subseqbound}
    \frac{\lambda_1(\alpha_k)}{-\alpha_k^2}\geq
    \frac{\lambda_{n+1}(\alpha_k)}{-\alpha_k^2}\geq
    \frac{1-\sum_{i=1}^n \frac{\lambda_i(\alpha_k)}{-\alpha_k^2}
      \,\langle u_d,\psi_i\rangle^2}{1-\sum_{i=1}^n \langle u_d,\psi_i
      \rangle^2}.
  \end{equation}
  Using the bound \eqref{eq:psibound}, which holds independently of $k
  \in \N$, together with the induction assumption
  $-\lambda_i(\alpha_k^2) /\alpha_k^2 \to 1$ as $k \to \infty$ for
  all $i \leq n$ it follows that the term on the right in
  \eqref{eq:subseqbound} converges to $1$ as $k \to \infty$. This
  establishes the desired limit for $-\lambda_{n+1}(\alpha_k)/
  \alpha_k^2$, which completes the proof.
\end{proof}

\section{Proof of Proposition~\ref{prop:tozero}}
\label{sec:prop}

Fix $n \geq 1$ and $p \geq 2$. We first obtain the following interior
estimate for $\psi_n$, from which the proof of the proposition will
follow easily.

\begin{lemma}
  \label{lemma:int}
  Under the assumptions of Proposition~\ref{prop:tozero}, if $\varphi
  \in C_c^\infty (\Omega)$, then
    \begin{displaymath}
      \lambda_n \geq -(p-1)^{-1}\frac{\int_{\Omega}|\psi_n|^p \,|\nabla 
        \varphi|^2 \,dx}{\int_{\Omega} |\psi_n|^p \,\varphi^2\,dx}
    \end{displaymath}
    for all $\alpha > 0$ and all $n \geq 1$.
\end{lemma}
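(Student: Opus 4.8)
The plan is to insert the Moser-type test function $v:=|\psi_n|^{p-2}\psi_n\,\varphi^2$ into the weak eigenvalue equation $a(\psi_n,v)=\lambda_n\langle\psi_n,v\rangle$. First I would check that $v$ is an admissible test function, i.e.\ that $v\in H^1(\Omega)$: by interior elliptic regularity $\psi_n\in C^\infty(\Omega)$ (the Robin boundary condition is irrelevant in the interior), and since $\varphi\in C_c^\infty(\Omega)$ and $t\mapsto|t|^{p-2}t$ is continuously differentiable for $p\geq 2$ (with derivative $(p-1)|t|^{p-2}$), the function $v$ lies in $C^1_c(\Omega)\subset H^1(\Omega)$. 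Because $v$ has compact support in $\Omega$, the boundary integral in $a(\psi_n,v)$ drops out, so the weak equation becomes $\int_\Omega\nabla\psi_n\cdot\nabla v\,dx=\lambda_n\int_\Omega|\psi_n|^p\varphi^2\,dx$, noting $\langle\psi_n,v\rangle=\int_\Omega|\psi_n|^p\varphi^2\,dx$.

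Next I would compute $\nabla v=(p-1)|\psi_n|^{p-2}\varphi^2\nabla\psi_n+2|\psi_n|^{p-2}\psi_n\,\varphi\,\nabla\varphi$ and expand the left-hand side as the sum of the ``good'' term $(p-1)\int_\Omega|\psi_n|^{p-2}\varphi^2|\nabla\psi_n|^2\,dx$ and the cross term $2\int_\Omega|\psi_n|^{p-2}\psi_n\,\varphi\,\nabla\psi_n\cdot\nabla\varphi\,dx$. Writing $|\psi_n|^{p-2}\psi_n=|\psi_n|^{(p-2)/2}\cdot|\psi_n|^{(p-2)/2}\psi_n$ and applying the weighted Cauchy--Schwarz (Young) inequality $|2ab|\leq a^2+b^2$ with $a=(p-1)^{1/2}|\psi_n|^{(p-2)/2}\varphi\,\nabla\psi_n$ and $b=(p-1)^{-1/2}|\psi_n|^{(p-2)/2}\psi_n\,\nabla\varphi$, the absolute value of the cross term is bounded pointwise, hence after integration by $(p-1)\int_\Omega|\psi_n|^{p-2}\varphi^2|\nabla\psi_n|^2\,dx+(p-1)^{-1}\int_\Omega|\psi_n|^p|\nabla\varphi|^2\,dx$ (using $|\psi_n|^{p-2}\psi_n^2=|\psi_n|^p$). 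The good term is then exactly cancelled, leaving $\lambda_n\int_\Omega|\psi_n|^p\varphi^2\,dx\geq-(p-1)^{-1}\int_\Omega|\psi_n|^p|\nabla\varphi|^2\,dx$.

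Finally I would divide by $\int_\Omega|\psi_n|^p\varphi^2\,dx$ to reach the claimed inequality; this is legitimate whenever $\varphi\not\equiv0$, since that integral is then strictly positive (if it vanished, $\psi_n$ would vanish on the open set $\{\varphi\neq0\}$, contradicting unique continuation for $-\Delta$ as $\psi_n\not\equiv0$), while for $\varphi\equiv0$ the asserted inequality is vacuous. I expect the only genuinely delicate point to be the admissibility of the test function, i.e.\ ensuring $v\in H^1(\Omega)$, which is exactly where interior regularity of $\psi_n$ is used; the rest is an elementary integration-by-parts-and-Young estimate. One could instead avoid invoking smoothness by working with truncations $|\psi_n|^{p-2}\psi_n$ replaced by $\min(|\psi_n|,M)^{p-2}\psi_n$ and letting $M\to\infty$, but the interior-regularity route is shorter and suffices here.
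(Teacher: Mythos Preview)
Your proposal is correct and follows essentially the same argument as the paper: both insert the test function $v=\varphi^2|\psi_n|^{p-2}\psi_n$ into the weak equation, drop the boundary term, and then control the cross term by the elementary inequality $2ab\ge -a^2-b^2$ (the paper phrases this as ``completing the square'', which is the same computation). The only cosmetic differences are that the paper justifies $v\in H^1(\Omega)$ via $\psi_n\in C(\overline\Omega)$ rather than interior regularity, and explicitly cites Aronszajn for the unique continuation step, exactly as you do.
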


\begin{proof}
  Given $\varphi \in C_c^\infty (\Omega)$, we will use $\phi:=
  \varphi^2 |\psi_n|^{p-2} \psi_n$ as a test function in the weak form
  of \eqref{eq:robin} given by
  \begin{equation}
    \label{eq:weak}
    \lambda_n \int_\Omega \psi_n v\,dx = a(\psi_n, v)
    =\int_\Omega \nabla \psi_n\cdot\nabla v\,dx-\int_{\partial
      \Omega} \alpha\psi_n v\,d\sigma
  \end{equation}
  for all $v \in H^1 (\Omega)$. We first note that if $p \geq 2$, then
  since $\psi_n \in C(\overline \Omega)$ (see
  \cite[Corollary~4.2]{daners:09:ip}) we have $\phi \in H^1 (\Omega)$
  with $\nabla \phi = 2\varphi|\psi_n|^{p-2} \psi_n \nabla \varphi +
  (p-1) \varphi^2 |\psi_n|^{p-2} \nabla \psi_n$. Moreover $\langle
  \phi, \psi_n \rangle = \int_\Omega \varphi^2 |\psi_n|^p\,dx \neq 0$,
  since $\psi_n$ cannot vanish identically on an open set (see
  \cite{aronszajn:57:uce}).  Hence, by completing the square,
  \begin{displaymath}
    \begin{split}
      \int_\Omega \nabla &\psi_n \cdot \nabla \phi\,dx\\
      &= \int_\Omega 2\varphi|\psi_n|^{p-2}\psi_n \nabla \varphi \cdot
      \nabla \psi_n + (p-1)\varphi^2 |\psi_n|^{p-2}|\nabla \psi_n|^2\,dx\\
      &= \int_\Omega \Bigl|(p-1)^\frac{1}{2}|\psi_n|^{\frac{p}{2}-1}\varphi
      \nabla \psi_n + (p-1)^{-\frac{1}{2}}|\psi_n|^{\frac{p}{2}-1}\psi_n
      \nabla \varphi \Bigr|^2\,dx\\
      &\qquad\qquad -\int_\Omega (p-1)^{-1}|\psi_n|^p 
      |\nabla \varphi|^2\,dx.
    \end{split}
  \end{displaymath}
  Substituting this into \eqref{eq:weak}, and using that $\varphi
  \equiv 0$ on $\partial \Omega$,
  \begin{displaymath}
    \lambda_n \int_\Omega \varphi^2 |\psi_n|^p\,dx
    = \int_\Omega \nabla \psi_n \cdot \nabla \phi\,dx
    \geq -\int_\Omega (p-1)^{-1}|\psi_n|^p|\nabla\varphi|^2\,dx.
  \end{displaymath}
  Rearranging gives the conclusion of the lemma.
\end{proof}

To prove the proposition, part (i) uses the result of
Theorem~\ref{th:asymp}, that $\lambda_n \to -\infty$ as $\alpha \to
\infty$; parts (ii) and (iii) follow directly from (i).

\begin{proof}[Proof of Proposition~\ref{prop:tozero}]
  (i) Fix $p \geq 2$, $n \geq 1$ and $\Omega_0 \subsubset \Omega$ and
  assume $\|\psi_n\|_{L^p(\Omega)} = 1$. Let $\varphi \in C_c^\infty
  (\Omega)$ be such that $0 \leq \varphi \leq 1$ in $\Omega$ and
  $\varphi \equiv 1$ in $\Omega_0$. Setting $K:= (p-1)^{-1} \|\nabla
  \varphi\|_{L^\infty (\Omega)}^2 > 0$, which depends only on $p$ and
  $\Omega_0$, by Lemma~\ref{lemma:int},
  \begin{displaymath}
    \lambda_n \geq \frac{-K}{\int_{\Omega_0} |\psi_n|^p\,dx}
  \end{displaymath}
  for all $\alpha > 0$. Since $\lambda_n \to -\infty$ as $\alpha \to
  \infty$ by Theorem~\ref{th:asymp}, this forces $\int_{\Omega_0}
  |\psi_n|^p\,dx \to 0$ as $\alpha \to \infty$.

  (ii) Fix $1 \leq q < p$ and $\varepsilon > 0$. Choose
  $\Omega_\varepsilon \subsubset \Omega$ such that $|\Omega \setminus
  \Omega_\varepsilon|^\frac{p-q}{p} < \varepsilon/2$, which we may do
  since $p > q$. Also choose $\alpha_\varepsilon > 0$ such that
  \begin{displaymath}
    \| \psi_n \|_{L^p (\Omega_\varepsilon)}^q < \frac{\varepsilon}{2}
    |\Omega_\varepsilon|^\frac{q-p}{p}
  \end{displaymath}
  for all $\alpha > \alpha_\varepsilon$, which we may do by (i).
  Noting that $p/q$ and $p/(p-q)$ are dual exponents, H\"older's
  inequality implies
  \begin{displaymath}
    \begin{split}
      \|\psi_n\|_{L^q(\Omega)}^q
      &= \int_{\Omega_\varepsilon} |\psi_n|^q\,dx + \int_{\Omega
        \setminus \Omega_\varepsilon} |\psi_n|^q\,dx\\
      &\leq \Bigl(\int_{\Omega_\varepsilon} |\psi_n|^p \,dx\Bigr)^\frac{q}{p}
      |\Omega_\varepsilon|^\frac{p-q}{p} + \Bigl(\int_{\Omega
        \setminus \Omega_\varepsilon}|\psi_n|^p \,dx\Bigr)^\frac{q}{p}
      |\Omega \setminus \Omega_\varepsilon|^\frac{p-q}{p}\\
      &= \|\psi_n\|_{L^p(\Omega_\varepsilon)}^q |\Omega_\varepsilon
      |^\frac{p-q}{p} + \|\psi_n\|_{L^p(\Omega \setminus 
        \Omega_\varepsilon)}^q |\Omega \setminus 
      \Omega_\varepsilon|^\frac{p-q}{p} < \varepsilon
    \end{split}
  \end{displaymath}
  for all $\alpha > \alpha_\varepsilon$, by choice of
  $\Omega_\varepsilon$ and $\alpha_\varepsilon$, and since
  $\|\psi_n\|_{L^p(\Omega \setminus \Omega_\varepsilon)}^q \leq 1$.

  (iii) Fix $r > p$. If we normalise $\psi_n$ so that $\|\psi_n
  \|_{L^r (\Omega)} = 1$, then (ii) implies $\|\psi_n\|_{L^p (\Omega)}
  \to 0$, so that
  \begin{equation}
    \label{eq:ratio}
    \frac{\|\psi_n\|_{L^r(\Omega)}}{\|\psi_n\|_{L^p(\Omega)} }
    \longrightarrow \infty
  \end{equation}
  as $\alpha \to \infty$. Now re-normalise so that
  $\|\psi_n\|_{L^p(\Omega)} = 1$. Since this does not affect
  \eqref{eq:ratio}, in this case $\|\psi_n\|_{L^r(\Omega)} \to
  \infty$.
\end{proof}

\bibliographystyle{amsplain}

\providecommand{\bysame}{\leavevmode\hbox to3em{\hrulefill}\thinspace}
\providecommand{\href}[2]{#2}

\end{document}